\providecommand{\U}[1]{\protect\rule{.1in}{.1in}}
\numberwithin{equation}{section}
\newtheorem{theorem}{Theorem}[section]
\newtheorem{proposition}[theorem]{Proposition}
\begin{document}
\title[Regularity for the 3D Navier-Stokes equations ]{Global regularity criterion for the 3D Navier-Stokes equations with large data}
\author{Abdelhafid Younsi}
\address{Department of Mathematics and Computer Science, University of Djelfa , Algeria.}
\email{younsihafid@gmail.com}
\subjclass[2010]{ 76D05, 76D03}
\keywords{Navier-Stokes equations. Strong solutions. }

\begin{abstract}
In this paper, we study the global regularity of strong solution to the Cauchy
problem of 3D incompressible Navier-Stokes equations with large data and
non-zero force. We prove that the strong solution exists globally for $\nabla
u\in L^{\alpha}\left(  0,T,L^{2}\left(  \Omega\right)  \right)  $ with
$2\leq\alpha\leq4$.

\end{abstract}
\maketitle

\section{Introduction}

Leray (1934) \cite{8} and Hopf (1951) \cite{6} showed the existence of global
weak solutions to the three-dimensional Navier-Stokes system, but their
uniqueness is still a fundamental open problem \cite{4} and \cite{5}.
Furthermore, the strong solutions for the 3D Navier-Stokes equations are
unique and can be shown to exist on a certain finite time interval for small
initial data and small forcing term (see \cite{2, 4, 5, 7} and references
therein). The global existence of small strong solutions has been proved, see
Constantin \cite{4}. For the 3D Navier-Stokes equations with large data, we
don't have a result of global existence and the question of asymptotic
behavior\ of strong solution, as time variable $t$ goes to $+\infty$ is a
major problem. Most recently, there has been some progress along this line
(see, for example, \cite{2}, \cite{3} and \cite{9}).

For $\nabla u\in L^{2}(0,T;L^{2}\left(  \Omega\right)  )$, it known that for
$\left\Vert \nabla u_{0}\right\Vert _{L^{2}}^{2}+\dfrac{2}{\nu}\int_{0}%
^{T}\left\Vert f\right\Vert _{L^{2}}^{2}ds\leq c\lambda_{1}^{1/2}\nu^{2}$ we
have global regularity Constantin \cite[P. 80 Theorem 9.3]{4}. In the work of
Beirao 1995 \cite{1}, it has been proved that the regularity is ensured if
$\nabla u\in L^{4}(0,T;L^{2}\left(  \Omega\right)  )$.

In this paper, we prove global regularity of strong solutions to the 3D
Navier-Stokes problem in the classe $\nabla u\in L^{\alpha}\left(
0,T,L^{2}\left(  \Omega\right)  \right)  $ with $2\leq\alpha\leq4$. We give a
sufficient condition for global existence in time for strong solution to the
3D Navier-Stokes equations with external force and large data.

\section{The regularity criterion}

In this paper, we consider the three-dimensional Navier-Stokes system
\begin{equation}%
\begin{array}
[c]{c}%
\dfrac{\partial u}{\partial t}+u.\nabla u=\nu\triangle u+\nabla p+f,\text{
}t>0,\\
\text{div }u=0,\text{ in }\Omega\times\left(  0,\infty\right)  \text{,
}u=0\text{ on }\partial\Omega\times\left(  0,\infty\right)  \text{ and
}u\left(  x,0\right)  =u_{0},\text{ in }\Omega\text{,}%
\end{array}
\label{1}%
\end{equation}
where $u=u\left(  x,t\right)  \ $is the velocity vector field, $p\left(
x,t\right)  $ is a scalar pressure, $f$ is a given force field and $\nu>$ $0$
is the viscosity of the fluid. $u\left(  x,0\right)  $ with div$u_{0}=0$ in
the sense of distribution is the initial velocity field, and $\Omega$ is a
regular, open, bounded subset of $%
\mathbb{R}
^{3}$ with smooth boundary $\partial\Omega$. We denote by $H^{m}\left(
\Omega\right)  $, the Sobolev space. We define the usual function spaces
$\mathcal{V}=\left\{  u\in C_{0}^{\infty}\left(  \Omega\right)  \text{: div
}u=0\right\}  $, $V=$ closure of $\mathcal{V}$ in $H_{0}^{1}\left(
\Omega\right)  $, $H=$ closure of $\mathcal{V}$ in $L^{2}(\Omega)$. The space
$H$ is equipped with the scalar product $(.,.)$ induced by $L^{2}(\Omega)$ and
the norm $\left\Vert .\right\Vert _{L^{2}}=\left\Vert .\right\Vert
_{L^{2}\left(  \Omega\right)  }$. For the local existence of strong solutions,
we have the following result in 3D \cite{4}.

\begin{theorem}
\cite[P. 82 Theorem 9.4]{4} Assume that $u_{0}\in V$ and $f\in L^{2}(0,T;H)$
are given. Then there exists a $T^{\ast}>0$ depending on $\nu$, $f$, $u_{0}$
and $T$, such that there exists a unique solution of $(\ref{1})$ satisfies
$u\in L^{\infty}(0,T^{\ast};V)\cap L^{2}\left(  0,T^{\ast};H^{2}\left(
\Omega\right)  \right)  $.
\end{theorem}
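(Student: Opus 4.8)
The plan is to construct the solution by a Galerkin approximation and to derive a priori estimates strong enough to control the nonlinearity in three dimensions on a short time interval. First I would apply the Leray projector $P$ of $L^{2}(\Omega)$ onto $H$ to eliminate the pressure, rewriting $(\ref{1})$ as the abstract evolution equation $u_{t}+\nu Au+B(u,u)=Pf$, where $A=-P\triangle$ is the Stokes operator (whose domain is $H^{2}(\Omega)\cap V$, with $\|Au\|_{L^{2}}$ equivalent to the $H^{2}$ norm there) and $B(u,v)=P(u\cdot\nabla v)$; write $b(u,v,w)=\int_{\Omega}(u\cdot\nabla v)\cdot w\,dx$ for the associated trilinear form. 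I would seek approximate solutions $u_{m}$ in the span of the first $m$ eigenfunctions of $A$, existing on a maximal interval by the standard theory of ODEs, and the whole point is to bound them uniformly up to a time $T^{\ast}$ independent of $m$.

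The basic energy estimate, obtained by testing with $u_{m}$, gives the $L^{\infty}(0,T;H)\cap L^{2}(0,T;V)$ bound valid on all of $[0,T]$. The decisive step is the enstrophy estimate obtained by testing the $m$-th equation with $Au_{m}$:
\begin{equation}
\frac{1}{2}\frac{d}{dt}\|\nabla u_{m}\|_{L^{2}}^{2}+\nu\|Au_{m}\|_{L^{2}}^{2}=-b(u_{m},u_{m},Au_{m})+(Pf,Au_{m}).\label{plan1}
\end{equation}
The main obstacle is controlling the trilinear term in three space dimensions. Using H\"older's inequality together with the Sobolev embedding $\|u\|_{L^{6}}\le c\|\nabla u\|_{L^{2}}$ and the interpolation $\|\nabla u\|_{L^{3}}\le c\|\nabla u\|_{L^{2}}^{1/2}\|Au\|_{L^{2}}^{1/2}$, I would obtain
\begin{equation}
|b(u_{m},u_{m},Au_{m})|\le c\|\nabla u_{m}\|_{L^{2}}^{3/2}\|Au_{m}\|_{L^{2}}^{3/2}.\label{plan2}
\end{equation}
Young's inequality then absorbs $\|Au_{m}\|_{L^{2}}^{3/2}$ into the viscous term at the price of $\frac{c}{\nu^{3}}\|\nabla u_{m}\|_{L^{2}}^{6}$, while the forcing is handled by $|(Pf,Au_{m})|\le\frac{\nu}{4}\|Au_{m}\|_{L^{2}}^{2}+\frac{1}{\nu}\|f\|_{L^{2}}^{2}$.

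Setting $y(t)=\|\nabla u_{m}(t)\|_{L^{2}}^{2}$, the bounds $(\ref{plan1})$ and $(\ref{plan2})$ reduce to the cubic Riccati-type differential inequality $y'\le\frac{c}{\nu^{3}}y^{3}+\frac{2}{\nu}\|f\|_{L^{2}}^{2}$. Because the right-hand side grows like $y^{3}$, no global bound can be extracted from this inequality alone --- this is precisely where three-dimensionality forces the restriction to a finite time --- but a comparison argument shows that $y$ stays bounded on some interval $[0,T^{\ast}]$ with $T^{\ast}$ depending only on $\|\nabla u_{0}\|_{L^{2}}$, $\nu$, $\|f\|_{L^{2}(0,T;H)}$ and $T$. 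This yields a uniform bound for $u_{m}$ in $L^{\infty}(0,T^{\ast};V)$ and, after integrating $(\ref{plan1})$ in time, in $L^{2}(0,T^{\ast};H^{2}(\Omega))$. I would then extract a weak-$\ast$ convergent subsequence, invoke the Aubin--Lions lemma to pass to the limit in the nonlinear term $B(u_{m},u_{m})$, and recover the pressure $p$ from the divergence-free constraint via de Rham's theorem. Uniqueness would follow by writing the equation for the difference $w=u-v$ of two solutions, testing with $w$, using $b(v,w,w)=0$ and estimating $b(w,u,w)$ through the regularity of $u$ already established, so that Gronwall's inequality forces $w\equiv0$.
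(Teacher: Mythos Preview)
Your proposal is a correct outline of the standard local existence proof for strong solutions in three dimensions, and in particular the key trilinear estimate you write down as $(\ref{plan2})$ is precisely the inequality the paper quotes from \cite[p.~79, (2.22)]{4}. However, there is nothing to compare against: the paper does not supply a proof of this theorem at all --- it is simply cited from Constantin--Foias \cite[p.~82, Theorem~9.4]{4} as background for the local strong solution whose lifespan is then extended under the criterion $(\ref{2})$. Your sketch is in fact faithful to the argument in that reference (Galerkin approximation, the enstrophy inequality obtained by testing with $Au$, the cubic Riccati bound $y'\le c\nu^{-3}y^{3}+\tfrac{2}{\nu}\|f\|_{L^{2}}^{2}$ giving a finite guaranteed existence time, Aubin--Lions compactness, and uniqueness via Gronwall), so it would serve as a self-contained replacement for the bare citation.
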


For $f=0$, our main result on the global regularity for strong solutions with
large data to the 3D Navier-Stokes equations $(\ref{1})$ reads as follows

\begin{theorem}
Let $u_{0}\in V$ and $u\left(  x,t\right)  $ is the corresponding strong
solution to the system $(\ref{1})$ on $[0,T]$. Then if $u$ satisfies
\begin{equation}
\int_{0}^{T}\left\Vert \nabla u\left(  .,t\right)  \right\Vert _{L^{2}%
}^{\alpha}dt<C_{1}\left(  \nu,u_{0}\right)  \text{ for }2\leq\alpha
\leq4\label{2}%
\end{equation}
the strong solution $u\left(  x,t\right)  $ exists globally in time for all
finite value of $\left\Vert \nabla u_{0}\right\Vert _{L^{2}}$.
\end{theorem}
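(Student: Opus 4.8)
The plan is to reduce the global existence claim to an a priori bound on the enstrophy $\|\nabla u(\cdot,t)\|_{L^2}^2$ over finite time intervals, and then to extract that bound from a differential inequality via Gr\"onwall's lemma. By the local existence theorem stated above, the strong solution can be continued past any time at which $\|\nabla u(\cdot,t)\|_{L^2}$ stays finite; equivalently, if the maximal existence time $T_{\max}$ were finite, then $\|\nabla u(\cdot,t)\|_{L^2}$ would necessarily blow up as $t\to T_{\max}$. Hence it suffices to show that hypothesis $(\ref{2})$ forces $\sup_{[0,T]}\|\nabla u(\cdot,t)\|_{L^2}<\infty$, which rules out blow-up and yields continuation to all of $[0,T]$ and beyond.

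First I would derive the enstrophy identity. Testing the momentum equation in $(\ref{1})$ against $-\Delta u$ (equivalently, against the Stokes operator $Au$), integrating over $\Omega$, and using $\operatorname{div}u=0$ to eliminate the pressure, one obtains
\[
\tfrac12\tfrac{d}{dt}\|\nabla u\|_{L^2}^2+\nu\|\Delta u\|_{L^2}^2=\int_\Omega (u\cdot\nabla u)\cdot\Delta u\,dx .
\]
The crux is the nonlinear term. Using H\"older's inequality, the three-dimensional Sobolev embedding $\|u\|_{L^6}\le C\|\nabla u\|_{L^2}$, and the Gagliardo--Nirenberg interpolation $\|\nabla u\|_{L^3}\le C\|\nabla u\|_{L^2}^{1/2}\|\Delta u\|_{L^2}^{1/2}$, I would bound its absolute value by $C\|\nabla u\|_{L^2}^{3/2}\|\Delta u\|_{L^2}^{3/2}$, and then apply Young's inequality (exponents $4$ and $4/3$) to absorb the highest-order factor into the viscous term. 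This yields
\[
\tfrac{d}{dt}\|\nabla u\|_{L^2}^2+\nu\|\Delta u\|_{L^2}^2\le C_\nu\,\|\nabla u\|_{L^2}^{4}\,\|\nabla u\|_{L^2}^2 .
\]

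Writing $y(t)=\|\nabla u(\cdot,t)\|_{L^2}^2$ and $g(t)=C_\nu\|\nabla u(\cdot,t)\|_{L^2}^4$, this is $y'\le g\,y$, so Gr\"onwall's lemma gives $y(t)\le y(0)\exp\!\left(C_\nu\int_0^t\|\nabla u\|_{L^2}^4\,ds\right)$. When $\alpha=4$ the exponent is controlled directly by $(\ref{2})$, and the constant $C_1(\nu,u_0)$ bounds the exponential, hence $\sup_{[0,T]}\|\nabla u\|_{L^2}$. For the genuinely weaker range $2\le\alpha<4$ I would bring in the energy identity $\tfrac12\tfrac{d}{dt}\|u\|_{L^2}^2+\nu\|\nabla u\|_{L^2}^2=0$, which supplies the free bound $\int_0^T\|\nabla u\|_{L^2}^2\,ds\le \|u_0\|_{L^2}^2/(2\nu)$, and would attempt to close the estimate by a continuity (bootstrap) argument in which the smallness of $C_1(\nu,u_0)$ keeps $\|\nabla u\|_{L^2}$ inside the range where the nonlinear term is absorbable.

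I expect this lower range to be the main obstacle. The scaling of the nonlinear term in the enstrophy identity pins the natural exponent at $\alpha=4$ (the Prodi--Serrin/Beir\~ao da Veiga borderline $\tfrac{2}{\alpha}+\tfrac{3}{2}\le 2$), so the case $\alpha<4$ cannot rely on Gr\"onwall finiteness alone: on a finite interval the $L^\alpha_tL^2_x$ norm does not dominate the $L^4_tL^2_x$ norm, so the smallness of the time integral must be used in an essential, non-perturbative way rather than merely to bound an exponential. Verifying that the constant $C_1(\nu,u_0)$ is genuinely chosen so that the absorption closes for the whole range $2\le\alpha\le4$ is precisely where I would concentrate the careful work, and where I would scrutinize the argument most closely.
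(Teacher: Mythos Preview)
Your opening matches the paper: the enstrophy identity and the estimate $|(u\cdot\nabla u,\Delta u)|\le c\,\|\nabla u\|_{L^2}^{3/2}\|\Delta u\|_{L^2}^{3/2}$ are exactly its starting point. For $\alpha=4$ your Gr\"onwall argument is correct and in fact tidier than what the paper does, since it recovers the Beir\~ao da Veiga criterion with no smallness needed at all.

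The gap is in the range $2\le\alpha<4$. Neither of the tools you reach for closes the estimate for large data. The energy identity controls $\int_0^T y\,dt$ (with $y=\|\nabla u\|_{L^2}^2$) but not the $\int_0^T y^2\,dt$ that sits in your Gr\"onwall exponent, so it does not feed back in; and a bootstrap that confines $\|\nabla u\|_{L^2}$ to the range where the nonlinearity is ``absorbable'' by dissipation requires $y\lesssim\nu^2\lambda_1^{1/2}$, which is precisely the small-data regime the theorem is supposed to go beyond. You are right that this is the crux, but the proposal does not supply a mechanism that actually works there.

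The paper's mechanism is different and treats all $\alpha\in[2,4]$ uniformly. From $y'+\nu\|\Delta u\|_{L^2}^2\le c\,y^{3/4}\|\Delta u\|_{L^2}^{3/2}$ it divides through by $(y+1)^2$ and factors
\[
\frac{y^{3/4}\|\Delta u\|_{L^2}^{3/2}}{(y+1)^2}
= y^{\alpha/8}\cdot\frac{\|\Delta u\|_{L^2}^{3/2}}{(y+1)^{3/2}}\cdot\frac{y^{(6-\alpha)/8}}{(y+1)^{1/2}},
\]
the last factor being $\le1$ exactly when $\alpha\ge2$. Young's inequality with exponents $4$ and $4/3$ then absorbs $\|\Delta u\|_{L^2}^2/(y+1)^2$ into the viscous term and leaves
\[
\frac{y'}{(y+1)^2}\ \le\ c_\nu\,y^{\alpha/2}.
\]
The left side now has a bounded antiderivative, so integration gives $(y_0+1)^{-1}-(y(t)+1)^{-1}\le c_\nu\int_0^T y^{\alpha/2}\,ds$; choosing $C_1(\nu,u_0)$ strictly below $c_\nu^{-1}(y_0+1)^{-1}$ forces $(y(t)+1)^{-1}>0$ for every $t$, hence $y$ stays finite. (The paper dresses this last integration up with a $\sin/\cos$ substitution, but that is cosmetic.) The idea you are missing is not a sharper interpolation or a continuity argument: it is the weighting by $(y+1)^{-2}$, which converts the supercritical differential inequality into one whose left-hand side is integrable uniformly in $y$ and whose right-hand side is exactly the quantity $\int_0^T\|\nabla u\|_{L^2}^{\alpha}\,dt$ appearing in the hypothesis.
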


\begin{proof}
Multiplying $(\ref{1})$ by $\triangle u$ and integrate over $\Omega$, we
obtain
\begin{equation}
\frac{1}{2}\frac{d}{dt}\left\Vert \nabla u\left(  .,t\right)  \right\Vert
_{L^{2}}^{2}+\nu\Vert\triangle u\Vert_{L^{2}}^{2}=\left(  \left(  u.\nabla
u\right)  .\triangle u\right)  .\label{3}%
\end{equation}
We will denote by $c_{i\in%
\mathbb{N}
}$, positive constants. Using the Holder inequality and the Sobolev theorem,
we get
\begin{equation}
\left\vert \left(  \left(  u.\nabla u\right)  .\triangle u\right)  \right\vert
\leq c_{1}\left\Vert \nabla u\right\Vert _{L^{2}}^{\frac{3}{2}}\left\Vert
\triangle u\right\Vert _{L^{2}}^{\frac{3}{2}},\label{4}%
\end{equation}
see \cite[P. 79 (2.22)]{4}. Combining $(\ref{3})$ and $(\ref{4})$, we obtain%
\begin{equation}
\frac{d}{dt}\left\Vert \nabla u\left(  .,t\right)  \right\Vert _{L^{2}}%
^{2}+\nu\Vert\triangle u\Vert_{L^{2}}^{2}\leq c_{1}\left\Vert \nabla
u\right\Vert _{L^{2}}^{\frac{3}{2}}\left\Vert \triangle u\right\Vert _{L^{2}%
}^{\frac{3}{2}}.\label{8}%
\end{equation}
Setting $y=\left\Vert \nabla u\left(  .,t\right)  \right\Vert _{L^{2}}^{2}$ in
$(\ref{8})$, we get
\begin{equation}
y\prime\left(  t\right)  +\nu\Vert\triangle u\Vert_{L^{2}}^{2}\leq
c_{1}y^{\frac{3}{4}}\left\Vert \triangle u\right\Vert _{L^{2}}^{\frac{3}{2}%
}.\label{9}%
\end{equation}
Dividing $(\ref{9})$ by $\left(  y+1\right)  ^{2}$ we get%
\begin{equation}
\frac{1}{\left(  y+1\right)  ^{2}}y\prime\left(  t\right)  +\nu\frac
{\Vert\triangle u\Vert_{L^{2}}^{2}}{\left(  y+1\right)  ^{2}}\leq
c_{1}y^{\frac{3}{4}}\frac{\left\Vert \triangle u\right\Vert _{L^{2}}^{\frac
{3}{2}}}{\left(  y+1\right)  ^{2}}\text{,}\label{10}%
\end{equation}
this is equivalant to%
\begin{equation}
\frac{1}{\left(  y+1\right)  ^{2}}y\prime\left(  t\right)  +\frac{\nu}{\left(
y+1\right)  ^{2}}\left\Vert \triangle u\right\Vert _{L^{2}}^{2}\leq
c_{2}y^{\frac{\alpha}{8}}\frac{\left\Vert \triangle u\right\Vert _{L^{2}%
}^{\frac{3}{2}}}{\left(  y+1\right)  ^{\frac{3}{2}}}\frac{y^{\frac{6-\alpha
}{8}}}{\left(  y+1\right)  ^{\frac{1}{2}}}\text{ with }2\leq\alpha
\leq4.\label{11}%
\end{equation}
For $2\leq\alpha\leq4$ we have $\frac{y^{\frac{6-\alpha}{8}}}{\left(
y+1\right)  ^{\frac{1}{2}}}\leq1$, then yields
\begin{equation}
\frac{1}{\left(  y+1\right)  ^{2}}y\prime\left(  t\right)  +\frac{\nu}{\left(
y+1\right)  ^{2}}\left\Vert \triangle u\right\Vert _{L^{2}}^{2}\leq
c_{3}y^{\frac{\alpha}{8}}\frac{\left\Vert \triangle u\right\Vert _{L^{2}%
}^{\frac{3}{2}}}{\left(  y+1\right)  ^{\frac{3}{2}}}.\label{12}%
\end{equation}
Applying Young's inequality on the second member of $(\ref{12})$ with $p=4/3$
and $q=4$ we obtain
\begin{equation}
\frac{1}{\left(  y+1\right)  ^{2}}y\prime\left(  t\right)  +\nu\frac
{\left\Vert \triangle u\right\Vert _{L^{2}}^{2}}{\left(  y+1\right)  ^{2}}\leq
c_{4}\nu^{3}y^{\frac{\alpha}{2}}+\nu\frac{\left\Vert \triangle u\right\Vert
_{L^{2}}^{2}}{2\left(  y+1\right)  ^{2}},\label{13}%
\end{equation}
wich gives%
\begin{equation}
\frac{1}{\left(  y+1\right)  ^{2}}y\prime\left(  t\right)  +\nu\frac
{\left\Vert \triangle u\right\Vert _{L^{2}}^{2}}{2\left(  y+1\right)  ^{2}%
}\leq c_{5}\nu^{3}y^{\frac{\alpha}{2}}.\label{14}%
\end{equation}
If we drop the positive term $\nu\frac{\left\Vert \triangle u\right\Vert
_{L^{2}}^{2}}{2\left(  y+1\right)  ^{2}}$ in $(\ref{14})$ then we have%
\begin{equation}
\frac{1}{\left(  y+1\right)  ^{2}}y\prime\left(  t\right)  \leq c_{6}%
y^{\frac{\alpha}{2}}\text{ with }c_{6}=c_{5}\nu^{3}.\label{15}%
\end{equation}
Multiplying $(\ref{15})$ by $\sin\left(  \frac{1}{y+1}\right)  $ and using the
fact that $\sin\left(  \frac{1}{y+1}\right)  \leq1$, we obtain%
\begin{equation}
\sin\left(  \frac{1}{y+1}\right)  \frac{1}{\left(  y+1\right)  ^{2}}%
y\prime\left(  t\right)  \leq c_{6}y^{\frac{\alpha}{2}}.\label{16}%
\end{equation}
Integrating $(\ref{16})$ on $[0,t]$, $t\leq T$, we get
\begin{equation}
\cos\left(  \frac{1}{y+1}\right)  \leq\cos\left(  \frac{1}{y_{0}+1}\right)
+c_{6}\int_{0}^{T}y^{\frac{\alpha}{2}}dt.\label{17}%
\end{equation}
For all finite $y_{0}$ we have $\cos\left(  \frac{1}{y_{0}+1}\right)  <1$,
then we can choose
\begin{equation}
\int_{0}^{T}y^{\frac{\alpha}{2}}dt<\frac{1-\cos\left(  \frac{1}{y_{0}%
+1}\right)  }{2c_{6}}=C_{1}\left(  \nu,\left\Vert \nabla u_{0}\right\Vert
_{L^{2}}\right)  .\label{18}%
\end{equation}
Thus, $(\ref{18})$ yields $\cos\left(  \frac{1}{y+1}\right)  <1$ and therefore
$\frac{1}{y+1}$ can not identically equal to zero wich means that $y\left(
t\right)  $ is finite for all $t\geq0$ and consequently $\nabla u\left(
.,t\right)  \in L^{\infty}\left(  0,T;L^{2}\left(  \Omega\right)  \right)  $.
This proves, in particular that $(\ref{2})$ is a condition sufficient for
global regularity.
\end{proof}

For the 3D Navier-Stokes equations with no-negligible forces we prove the
following result

\begin{proposition}
Let $u_{0}\in V$ , $f\in L^{2}\left(  0,T;L^{2}\left(  \Omega\right)  \right)
$ and $u\left(  x,t\right)  $ is the corresponding strong solution to the
system $(\ref{1})$ on $[0,T]$. Then if $u$ satisfies
\begin{equation}
\int_{0}^{T}\left\Vert \nabla u\left(  .,t\right)  \right\Vert _{L^{2}%
}^{\alpha}dt<C_{2}\left(  \nu,u_{0},f\right)  \text{ for }2\leq\alpha
\leq4\label{19}%
\end{equation}
the strong solution $u\left(  x,t\right)  $ exists globally in time for all
finite value of $\left\Vert \nabla u_{0}\right\Vert _{L^{2}}$.
\end{proposition}

\begin{proof}
Using the Schwartz and Young inequalities, we obtain that
\begin{align}
\left\vert \left(  f,\triangle u\right)  \right\vert  &  \leq\left\Vert
f\right\Vert _{L^{2}}\left\Vert \triangle u\right\Vert _{L^{2}}\label{20}\\
&  \leq c_{7}\left\Vert f\right\Vert _{L^{2}}^{2}+\frac{\nu}{4}\left\Vert
\triangle u\right\Vert _{L^{2}}^{2}.\nonumber
\end{align}
Combining $(\ref{3})$, $(\ref{4})$ and $(\ref{20})$, we have
\begin{equation}
\frac{d}{dt}\left\Vert \nabla u\left(  .,t\right)  \right\Vert _{L^{2}}%
^{2}+\nu\Vert\triangle u\Vert_{L^{2}}^{2}\leq c_{7}\left\Vert f\right\Vert
_{L^{2}}^{2}+c_{1}y^{\frac{3}{4}}\left\Vert \triangle u\right\Vert _{L^{2}%
}^{\frac{3}{2}}.\label{21}%
\end{equation}
Dividing $(\ref{21})$ by $\left(  y+1\right)  ^{2}$ and since $\frac
{1}{\left(  y+1\right)  ^{2}}\leq1$, it follows that%
\begin{equation}
\frac{1}{\left(  y+1\right)  ^{2}}y\prime\left(  t\right)  +\nu\frac
{\Vert\triangle u\Vert_{L^{2}}^{2}}{\left(  y+1\right)  ^{2}}\leq
c_{7}\left\Vert f\right\Vert _{L^{2}}^{2}+c_{1}y^{\frac{3}{4}}\frac{\left\Vert
\triangle u\right\Vert _{L^{2}}^{\frac{3}{2}}}{\left(  y+1\right)  ^{2}%
}\text{.}\label{22}%
\end{equation}
The proof of $(\ref{19})$ follows in the same manner as that of $(\ref{2})$.
After some manipulations similaire to $(\ref{3})$-$(\ref{16})$ and integration
on $\left[  0,T\right]  $, we find%
\begin{equation}
\cos\left(  \frac{1}{y+1}\right)  \leq\cos\left(  \frac{1}{y_{0}+1}\right)
+c_{7}\int_{0}^{T}\left\Vert f\right\Vert _{L^{2}}^{2}+c_{6}\int_{0}%
^{T}y^{\frac{\alpha}{2}},\label{23}%
\end{equation}
which implies if
\begin{equation}
\cos\left(  \frac{1}{y_{0}+1}\right)  +c_{7}\int_{0}^{T}\left\Vert
f\right\Vert _{L^{2}}^{2}+c_{6}\int_{0}^{T}y^{\frac{\alpha}{2}}<1,\label{24}%
\end{equation}
then we have global regularity and $\nabla u\left(  .,t\right)  \in L^{\infty
}\left(  0,\infty;L^{2}\left(  \Omega\right)  \right)  $. As before, we can
choose
\begin{equation}
C_{2}\left(  \nu,u_{0},f\right)  =\frac{1}{2c_{6}}\left(  1-\cos\left(
\frac{1}{y_{0}+1}\right)  -c_{7}\int_{0}^{T}\left\Vert f\right\Vert _{L^{2}%
}^{2}\right)  .\label{25}%
\end{equation}
This finishes the proof of $(\ref{19})$.
\end{proof}

\end{document}